\newcommand{\mr}{\hbox{\boldmath$\cdot$}}
\def\A{\mathscr{A}}
\def\C{\mathscr{C}}
\def\E{\mathbb{E}}
\def\s{\mathfrak{s}}
\def\id{\mathrm{id}}
\def\op{^\mathrm{op}}
\def\Ab{\mathit{Ab}}
\def\del{\delta}
\def\dr{\ar@{->}[r]}
\def\Im{\mbox{Im}}
\def\Ker{\mbox{Ker}}
\newcommand{\CC}{{\bf{C}}^{n+2}_{\C}}
\newcommand{\ov}{\overset}
\newcommand{\lra}{\longrightarrow}
\newcommand{\co}{\colon}
\newcommand{\uas}{^{\ast}}            
\newcommand{\sas}{_{\ast}}
\newcommand{\Xd}{\langle X^{\mr},\del\rangle}  
\newcommand{\Yr}{\langle Y^{\mr},\rho\rangle}  
\begin{document}
\title{\Large{\bf Idempotent completion of certain $\bm{n}$-exangulated categories\footnotetext{\hspace{-1em}Jian He was supported by the National Natural Science Foundation of China (Grant No. 12171230). Panyue Zhou was supported by the National Natural Science Foundation of China (Grant No. 11901190).} }}
\medskip
\author{Jian He, Jing He and Panyue Zhou}

\date{}

\maketitle
\def\blue{\color{blue}}
\def\red{\color{red}}

\newtheorem{theorem}{Theorem}[section]
\newtheorem{lemma}[theorem]{Lemma}
\newtheorem{corollary}[theorem]{Corollary}
\newtheorem{proposition}[theorem]{Proposition}
\newtheorem{conjecture}{Conjecture}
\theoremstyle{definition}
\newtheorem{definition}[theorem]{Definition}
\newtheorem{question}[theorem]{Question}
\newtheorem{remark}[theorem]{Remark}
\newtheorem{remark*}[]{Remark}
\newtheorem{example}[theorem]{Example}
\newtheorem{example*}[]{Example}
\newtheorem{condition}[theorem]{Condition}
\newtheorem{condition*}[]{Condition}
\newtheorem{construction}[theorem]{Construction}
\newtheorem{construction*}[]{Construction}

\newtheorem{assumption}[theorem]{Assumption}
\newtheorem{assumption*}[]{Assumption}

\baselineskip=17pt
\parindent=0.5cm
\vspace{-6mm}

\begin{abstract}
\baselineskip=16pt
It was shown recently that
an $n$-extension closed subcategory $\A$ of a Krull-Schmidt $(n+2)$-angulated category
has a natural structure of an $n$-exangulated category. In this article, we prove that its idempotent completion $\widetilde{\A}$ admits an $n$-exangulated structure.
It is not only a generalization of the main result of Lin, but also gives an $n$-exangulated category which is neither $n$-exact nor $(n+2)$-angulated in general.\\[0.2cm]
\textbf{Keywords:} idempotent completion; $(n+2)$-angulated categories; $n$-extension closed subcategories; $n$-exangulated categories\\[0.1cm]
\textbf{ 2020 Mathematics Subject Classification:} 18G80; 18E10
\end{abstract}

\pagestyle{myheadings}
\markboth{\rightline {\scriptsize   Jian He, Jing He and Panyue Zhou}}
         {\leftline{\scriptsize Idempotent completion of certain $n$-exangulated categories}}

\section{Introduction}
Triangulated categories and exact categories are two fundamental structures in algebra, geometry and topology.
They are also important tools in many mathematical branches.
Nakaoka and Palu \cite{NP} introduced the notion
of extriangulated categories, whose extriangulated structures are given by $\E$-triangles with some axioms. Triangulated categories and exact categories are extriangulated categories. There are a lot of examples of extriangulated categories which are neither triangulated categories nor exact categories, see \cite{NP,ZZ,HZZ1,ZhZ,NP1}.

In \cite{GKO}, Geiss, Keller and Oppermann introduced $(n+2)$-angulated categories for any positive integer $n$. These
are a ``higher dimensional" analogue of triangulated categories,
 in the sense that triangles are replaced by $(n+2)$-angles, that is, morphism sequences of length $(n+2)$. Thus a $3$-angulated category is precisely a triangulated category.
An important source of examples of $(n+2)$-angulated categories are certain cluster tilting subcategories of triangulated categories.
Jasso \cite{Ja} introduced $n$-exact categories as
higher analogs of exact categories. Moreover, he also proved that any $n$-cluster-tilting subcategory of an
exact category is an $n$-exact category. Recently, Herschend, Liu and Nakaoka \cite{HLN} introduced the
notion of $n$-exangulated categories for any positive integer $n$. It is not only a higher
dimensional analogue of extriangulated categories defined by Nakaoka and Palu \cite{NP},
but also gives a common generalization of $n$-exact categories in the sense of
Jasso \cite{Ja} and $(n+2)$-angulated categories in the sense of Geiss-Keller-Oppermann \cite{GKO}.

Balmer and Schlichting \cite{BM} proved that the idempotent completion of a triangulated category admits a natural triangulated structure. B\"{u}hler \cite{B} showed that the idempotent completion of an exact category is still an exact category. Wang-Wei-Zhang-Zhao \cite{WWZZ} and Msapato \cite{M} independently unified their results of
Balmer and Schlichting \cite{BM} and  B\"{u}hler \cite{B} in the framework of extriangulated
categories. Specifically speaking, they proved that the idempotent completion of an extriangulated category is extriangulated. 
In order to construct more examples of $(n+2)$-angulated categories, Lin \cite{L2} extended Balmer and Schlichting's result from 3 to $(n+2)$. More precisely, Lin proved that the idempotent completion of an $(n+2)$-angulated category admits a natural $(n+2)$-angulated structure.
Let $(\C,\Sigma,\Theta)$ be a Krull-Schmidt $(n+2)$-angulated category and $\A$ be an $n$-extension closed subcategory of $\C$. Zhou \cite{Z} proved that $\A$ admits the structure of an $n$-exangulated category.

Based on their results of Lin \cite{L2} and Zhou \cite{Z}, we prove the following conclusion which gives $n$-exangulated categories which are neither $n$-exact nor $(n+2)$-angulated in general.

\begin{theorem} \rm (see Theorem \ref{main} for details)\label{mmm} Let $(\C,\Sigma,\Theta)$ be a Krull-Schmidt $(n+2)$-angulated category and
$\A$ be an $n$-extension closed subcategory of $\C$. Then the idempotent completion $\widetilde\A$ of $\A$ admits an $n$-exangulated structure.
\end{theorem}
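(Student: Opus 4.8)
The plan is to realise $\widetilde{\A}$ as an $n$-extension closed subcategory of a Krull--Schmidt $(n+2)$-angulated category and then to apply Zhou's theorem \cite{Z}. First I would pass to idempotent completions. By Lin's theorem \cite{L2} the idempotent completion $\widetilde{\C}$ carries a natural $(n+2)$-angulated structure, whose shift I again denote $\Sigma$; moreover, since $\C$ is Krull--Schmidt it is already idempotent complete, so the canonical functor $\C\to\widetilde{\C}$ is an equivalence and $\widetilde{\C}$ is again Krull--Schmidt. The inclusion $\A\subseteq\C$ induces a fully faithful additive functor $\widetilde{\A}\to\widetilde{\C}$ whose essential image is the full subcategory $\B\subseteq\widetilde{\C}$ of all direct summands of objects of $\A$; thus $\B$ is closed under isomorphisms, finite direct sums and direct summands, and the equivalence $\widetilde{\A}\simeq\B$ is compatible with $\Sigma$. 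Since $n$-exangulated structures transport along additive equivalences, it suffices to show that $\B$ is an $n$-extension closed subcategory of $\widetilde{\C}$, and then to apply Zhou's theorem \cite{Z} to $\B\subseteq\widetilde{\C}$.

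The core of the argument is this last claim. Given $B^0, B^{n+1}\in\B$ and $\delta\in\widetilde{\C}(B^{n+1},\Sigma B^0)$, I must produce an $(n+2)$-angle $B^0\to Y^1\to\cdots\to Y^n\to B^{n+1}\xrightarrow{\;\delta\;}\Sigma B^0$ of $\widetilde{\C}$ with all $Y^i\in\B$. Choose $W^0, W^{n+1}\in\B$ with $A^0:=B^0\oplus W^0\in\A$ and $A^{n+1}:=B^{n+1}\oplus W^{n+1}\in\A$, and let $\bar\delta\co A^{n+1}\twoheadrightarrow B^{n+1}\xrightarrow{\;\delta\;}\Sigma B^0\rightarrowtail\Sigma A^0$ be the ``padded'' morphism, so that $\bar\delta$ has image inside $\Sigma B^0$ and vanishes on the summand $W^{n+1}$. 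Since $\A$ is $n$-extension closed in $\C$, we may complete $\bar\delta$ to an $(n+2)$-angle $\Delta\co A^0\to C^1\to\cdots\to C^n\to A^{n+1}\xrightarrow{\;\bar\delta\;}\Sigma A^0$ with every $C^i\in\A$; it is also a distinguished $(n+2)$-angle of $\widetilde{\C}$. Using the weak-(co)kernel property of the maps of $\Delta$ together with the fact that the category of $(n+2)$-$\Sigma$-sequences in $\widetilde{\C}$ is idempotent complete, one checks that the trivial $(n+2)$-angles $W^0\xrightarrow{\id}W^0\to 0\to\cdots\to 0$ and $0\to\cdots\to 0\to W^{n+1}\xrightarrow{\id}W^{n+1}$ split off the underlying sequence of $\Delta$ as direct summands. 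The complementary summand $\Delta''$ has first term $B^0$, last term $B^{n+1}$ and connecting morphism $\delta$, its middle terms are direct summands of the $C^i$ and hence lie in $\B$, and --- being a direct summand of the distinguished $(n+2)$-angle $\Delta$ of $\widetilde{\C}$ --- it is itself distinguished. This $\Delta''$ is the desired $(n+2)$-angle, so $\B$ is $n$-extension closed in $\widetilde{\C}$.

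With this established, Zhou's theorem \cite{Z} applied to $\B\subseteq\widetilde{\C}$ endows $\B$ with an $n$-exangulated structure, which we transport along $\widetilde{\A}\simeq\B$ to get the asserted structure on $\widetilde{\A}$. I expect the middle step to be the main obstacle, and it is the one that genuinely uses Lin's theorem: one should not try to complete $\bar\delta$ in two different ways and compare them (a delicate matter, since for $n\ge 2$ the $(n+2)$-angle completing a morphism need not be unique up to isomorphism), but rather split the padding off a \emph{single} $(n+2)$-angle; and for that one needs precisely the two features that passing to $\widetilde{\C}$ supplies, namely that idempotents split at the level of $(n+2)$-$\Sigma$-sequences and that a direct summand of a distinguished $(n+2)$-angle is distinguished. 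What remains is routine: writing down the retractions that split off $W^0$ and $W^{n+1}$ (with a little extra care in the triangulated case $n=1$, where the two padded summands overlap in degree $1$), checking that $\C\simeq\widetilde{\C}$ and $\widetilde{\A}\simeq\B$ are compatible with $\Sigma$ and with the classes of distinguished $(n+2)$-angles, and recalling that the realisation of an $\E$-extension is in any case only well defined up to homotopy equivalence of $\E$-attached complexes.
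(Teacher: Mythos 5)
Your proposal is correct and follows essentially the same route as the paper: pass to $\widetilde{\C}$ via Lin's theorem, show that $\widetilde{\A}$ is $n$-extension closed in $\widetilde{\C}$ by padding the given morphism to one between objects of $\A$, completing it to an $(n+2)$-angle with middle terms in $\A$, splitting off the trivial summands (the paper isolates this as Lemma \ref{y1}, proved exactly from the weak-cokernel property and idempotent splitting you invoke), and then applying Zhou's theorem to conclude. Your additional observation that a Krull--Schmidt category is already idempotent complete, so that $\C\to\widetilde{\C}$ is an equivalence, is a small simplification the paper does not make explicit, but it does not change the argument.
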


Note that any $(n+2)$-angulated category $\C$ can be regarded as an $n$-extension closed subcategory of itself, hence Theorem \ref{mmm} generalizes the main result of Lin \cite{L2}. Our proof method is to avoid verifying that (EA1) holds, in fact, the verification of this axiom is complicated and difficult.

This article is organized as follows. In Section 2, we review some elementary definitions on $(n+2)$-angulated categories and $n$-exangulated categories. In Section 3, we prove our main result in this article.

\section{Preliminaries}
In this section, we briefly review basic concepts concerning $n$-exangulated categories
and $(n+2)$-angulated categories. Let $\C$ be an additive category and $n$ be a positive integer.
\subsection{$(n+2)$-angulated categories}
An $(n+2)$-$\Sigma$-sequence in $\C$ is a sequence of objects and morphisms
$$A_0\xrightarrow{f_0}A_1\xrightarrow{f_1}A_2\xrightarrow{f_2}\cdots\xrightarrow{f_{n-1}}A_n\xrightarrow{f_n}A_{n+1}\xrightarrow{f_{n+1}}\Sigma A_0.$$
Its {\em left rotation} is the $(n+2)$-$\Sigma$-sequence
$$A_1\xrightarrow{f_1}A_2\xrightarrow{f_2}A_3\xrightarrow{f_3}\cdots\xrightarrow{f_{n}}A_{n+1}\xrightarrow{f_{n+1}}\Sigma A_0\xrightarrow{(-1)^{n}\Sigma f_0}\Sigma A_1.$$
A \emph{morphism} of $(n+2)$-$\Sigma$-sequences is  a sequence of morphisms $\varphi=(\varphi_0,\varphi_1,\cdots,\varphi_{n+1})$ such that the following diagram commutes
$$\xymatrix{
A_0 \ar[r]^{f_0}\ar[d]^{\varphi_0} & A_1 \ar[r]^{f_1}\ar[d]^{\varphi_1} & A_2 \ar[r]^{f_2}\ar[d]^{\varphi_2} & \cdots \ar[r]^{f_{n}}& A_{n+1} \ar[r]^{f_{n+1}}\ar[d]^{\varphi_{n+1}} & \Sigma A_0 \ar[d]^{\Sigma \varphi_0}\\
B_0 \ar[r]^{g_0} & B_1 \ar[r]^{g_1} & B_2 \ar[r]^{g_2} & \cdots \ar[r]^{g_{n}}& B_{n+1} \ar[r]^{g_{n+1}}& \Sigma B_0
}$$
where each row is an $(n+2)$-$\Sigma$-sequence. It is an {\em isomorphism} if $\varphi_0, \varphi_1, \varphi_2, \cdots, \varphi_{n+1}$ are all isomorphisms in $\C$.

\begin{definition}\cite[Definition 2.1]{GKO}
An $(n+2)$-\emph{angulated category} is a triple $(\C, \Sigma, \Theta)$, where $\C$ is an additive category, $\Sigma$ is an auto-equivalence of $\C$ ($\Sigma$ is called the $n$-suspension functor), and $\Theta$ is a class of $(n+2)$-$\Sigma$-sequences (whose elements are called $(n+2)$-angles), which satisfies the following axioms:
\begin{itemize}[leftmargin=3em]
\item[\textbf{(N1)}]
\begin{itemize}
\item[(a)] The class $\Theta$ is closed under isomorphisms, direct sums and direct summands.

\item[(b)] For each object $A\in\C$ the trivial sequence
$$ A\xrightarrow{1_A}A\rightarrow 0\rightarrow0\rightarrow\cdots\rightarrow 0\rightarrow \Sigma A$$
belongs to $\Theta$.

\item[(c)] Each morphism $f_0\colon A_0\rightarrow A_1$ in $\C$ can be extended to $(n+2)$-$\Sigma$-sequence: $$A_0\xrightarrow{f_0}A_1\xrightarrow{f_1}A_2\xrightarrow{f_2}\cdots\xrightarrow{f_{n-1}}A_n\xrightarrow{f_n}A_{n+1}\xrightarrow{f_{n+1}}\Sigma A_0.$$
\end{itemize}
\item[\textbf{(N2)}] An $(n+2)$-$\Sigma$-sequence belongs to $\Theta$ if and only if its left rotation belongs to $\Theta$.

\item[\textbf{(N3)}] Each solid commutative diagram
$$\xymatrix{
A_0 \ar[r]^{f_0}\ar[d]^{\varphi_0} & A_1 \ar[r]^{f_1}\ar[d]^{\varphi_1} & A_2 \ar[r]^{f_2}\ar@{-->}[d]^{\varphi_2} & \cdots \ar[r]^{f_{n}}& A_{n+1} \ar[r]^{f_{n+1}}\ar@{-->}[d]^{\varphi_{n+1}} & \Sigma A_0 \ar[d]^{\Sigma\varphi_0}\\
B_0 \ar[r]^{g_0} & B_1 \ar[r]^{g_1} & B_2 \ar[r]^{g_2} & \cdots \ar[r]^{g_{n}}& B_{n+1} \ar[r]^{g_{n+1}}& \Sigma B_0
}$$ with rows in $\Theta$, the dotted morphisms exist and give a morphism of  $(n+2)$-$\Sigma$-sequences.

\item[\textbf{(N4)}] In the situation of (N3), the morphisms $\varphi_2,\varphi_3,\cdots,\varphi_{n+1}$ can be chosen such that the mapping cone
$$A_1\oplus B_0\xrightarrow{\left(\begin{smallmatrix}
                                        -f_1&0\\
                                        \varphi_1&g_0
                                       \end{smallmatrix}
                                     \right)}
A_2\oplus B_1\xrightarrow{\left(\begin{smallmatrix}
                                        -f_2&0\\
                                        \varphi_2&g_1
                                       \end{smallmatrix}
                                     \right)}\cdots\xrightarrow{\left(\begin{smallmatrix}
                                        -f_{n+1}&0\\
                                        \varphi_{n+1}&g_n
                                       \end{smallmatrix}
                                     \right)} \Sigma A_0\oplus B_{n+1}\xrightarrow{\left(\begin{smallmatrix}
                                        -\Sigma f_0&0\\
                                        \Sigma\varphi_1&g_{n+1}
                                       \end{smallmatrix}
                                     \right)}\Sigma A_1\oplus\Sigma B_0$$
belongs to $\Theta$.
   \end{itemize}
\end{definition}

\begin{remark}\label{rem1}
(a) From \cite{GKO}, we know that the classical triangulated categories are the special case $n=1$.

(b)  The composition of two consecutive morphisms in an $(n+2)$-angle is zero, see \cite[Lemma 3.1]{BT}.
\end{remark}

\begin{lemma}\emph{\cite[Lemma 3.13]{F}}\label{y1}
Let $(\C, \Sigma, \Theta)$ be an $(n+2)$-angulated category, and
\begin{equation}\label{t1}
\begin{array}{l}
A_0\xrightarrow{f_0}A_1\xrightarrow{f_1}A_2\xrightarrow{f_2}\cdots\xrightarrow{f_{n-1}}A_n\xrightarrow{f_n}A_{n+1}\xrightarrow{f_{n+1}}\Sigma A_0.\end{array}
\end{equation}
be an $(n+2)$-angle in $\C$. Then the following statments are equivalent:
\begin{itemize}
\item[\rm (1)] $f_0$ is a section (also known as a split monomorphism);

\item[\rm (2)] $f_n$ is a retraction (also known as a split epimorphism);

\item[\rm (3)] $f_{n+1}=0$.
\end{itemize}
If an $(n+2)$-angle \emph{(\ref{t1})} satisfies one of the above equivalent conditions, it is called \emph{split}.
\end{lemma}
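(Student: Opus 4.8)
The plan is to deduce all three equivalences from the two long exact sequences that the functors $\Hom_\C(A_{n+1},-)$ and $\Hom_\C(-,A_0)$ attach to the $(n+2)$-angle (\ref{t1}). Recall that for any $W\in\C$ the covariant functor $\Hom_\C(W,-)$ and the contravariant functor $\Hom_\C(-,W)$ turn an $(n+2)$-angle into a long exact sequence of abelian groups: at each spot the inclusion $\Im\subseteq\Ker$ is Remark \ref{rem1}(b) (applied, at the wrap-around spots, to a rotation of (\ref{t1}) via (N2)), while the reverse inclusion $\Ker\subseteq\Im$ is the genuine homological property, obtained from the rotation axiom (N2) and the fill-in axiom (N3) exactly as in the triangulated case $n=1$. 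Granting this, each of (1), (2), (3) becomes the assertion that a specific identity morphism lies in the image of one map, and the equivalences fall out by evaluating at $1_{A_0}$ and $1_{A_{n+1}}$.

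For (1)$\Leftrightarrow$(3) I would apply $\Hom_\C(-,A_0)$ to (\ref{t1}) and use exactness at $\Hom_\C(A_0,A_0)$, where the relevant portion reads
$$\Hom_\C(A_1,A_0)\xrightarrow{(f_0)^{\ast}}\Hom_\C(A_0,A_0)\xrightarrow{(\Sigma^{-1}f_{n+1})^{\ast}}\Hom_\C(\Sigma^{-1}A_{n+1},A_0),$$
the second arrow being the contravariant connecting map. If $f_{n+1}=0$ then $(\Sigma^{-1}f_{n+1})^{\ast}=0$, so $1_{A_0}\in\Ker(\Sigma^{-1}f_{n+1})^{\ast}=\Im(f_0)^{\ast}$, i.e. $1_{A_0}=r\circ f_0$ for some $r\colon A_1\to A_0$ and $f_0$ is a section; conversely, if $r f_0=1_{A_0}$ then $(f_0)^{\ast}$ is surjective (any $\alpha$ equals $(f_0)^{\ast}(\alpha r)$), hence $(\Sigma^{-1}f_{n+1})^{\ast}=0$, and evaluating at $1_{A_0}$ gives $\Sigma^{-1}f_{n+1}=0$, so $f_{n+1}=0$. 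The implication (1)$\Rightarrow$(3) also has a transparent self-contained proof I would record: comparing (\ref{t1}) with the trivial $(n+2)$-angle $A_0\xrightarrow{1_{A_0}}A_0\to0\to\cdots\to0\to\Sigma A_0$ of (N1)(b) via the pair $(\varphi_0,\varphi_1)=(1_{A_0},r)$ — whose first square commutes since $r f_0=1_{A_0}$ — axiom (N3) produces a morphism of $(n+2)$-$\Sigma$-sequences whose components $\varphi_2,\dots,\varphi_{n+1}$ are forced to be $0$, and commutativity of the last square reads $1_{\Sigma A_0}\circ f_{n+1}=0$, i.e. $f_{n+1}=0$.

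For (2)$\Leftrightarrow$(3) I would argue symmetrically with the covariant functor $\Hom_\C(A_{n+1},-)$, using exactness at $\Hom_\C(A_{n+1},A_{n+1})$:
$$\Hom_\C(A_{n+1},A_n)\xrightarrow{(f_n)_{\ast}}\Hom_\C(A_{n+1},A_{n+1})\xrightarrow{(f_{n+1})_{\ast}}\Hom_\C(A_{n+1},\Sigma A_0).$$
Since $(f_{n+1})_{\ast}(1_{A_{n+1}})=f_{n+1}$, the identity $1_{A_{n+1}}$ lies in $\Ker(f_{n+1})_{\ast}$ if and only if $f_{n+1}=0$, and by exactness this occurs if and only if $1_{A_{n+1}}\in\Im(f_n)_{\ast}$, i.e. $f_n s=1_{A_{n+1}}$ for some $s\colon A_{n+1}\to A_n$, i.e. $f_n$ is a retraction. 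This settles (2)$\Leftrightarrow$(3), and combined with (1)$\Leftrightarrow$(3) proves the lemma.

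The main obstacle is the homological functor property itself, namely the nontrivial inclusion $\Ker\subseteq\Im$ in the two sequences; everything after it is a one-line evaluation at an identity. Accordingly I would devote the bulk of the write-up either to citing this property or, for full self-containment, to proving just the two exactness statements at the spots $\Hom_\C(A_0,A_0)$ and $\Hom_\C(A_{n+1},A_{n+1})$ by the standard fill-in argument using (N2) and (N3). It is worth stressing that the fill-in axiom (N3) delivers the easy directions (section or retraction $\Rightarrow f_{n+1}=0$) at once, as the explicit comparison above shows, whereas the converse directions (from $f_{n+1}=0$ to the existence of a splitting) are exactly where the substantive exactness is needed.
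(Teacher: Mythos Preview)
The paper does not prove this lemma; it is stated with a citation to \cite[Lemma 3.13]{F} and left without proof, so there is no in-paper argument to compare against directly. Your approach is correct and standard: the homological-functor property of $\Hom_\C(W,-)$ and $\Hom_\C(-,W)$ on $(n+2)$-angles follows from (N1)(b), (N2) and (N3) exactly as you sketch, and evaluating the resulting exact sequences at $1_{A_0}$ and $1_{A_{n+1}}$ yields all three equivalences at once. It is worth remarking that what you single out as the main obstacle --- the inclusion $\Ker\subseteq\Im$ at the relevant spot --- is in fact established later in the paper as Lemma~\ref{y0}(2): if $f_{n+1}g_{n+1}=0$ then $g_{n+1}$ factors through $f_n$, proved by the same fill-in against a rotated trivial $(n+2)$-angle that you describe. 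Specializing to $g_{n+1}=1_{A_{n+1}}$ gives (3)$\Rightarrow$(2) verbatim, and a dual rotation handles (3)$\Rightarrow$(1). Your proposal and the paper's surrounding toolkit are thus fully aligned; the authors simply chose to import the finished statement from \cite{F} rather than assemble it from their own Lemma~\ref{y0}.
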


The following lemma exhibits a crucial difference between $(n+2)$-angulated and
$n$-exact categories.

\begin{lemma}\label{lemma}
Let $(\C, \Sigma, \Theta)$ be an $(n+2)$-angulated category.
Then any monomorphism in $\C$ is a section.
Dually any epimorphism $\C$ is a retraction.
\end{lemma}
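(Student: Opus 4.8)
The plan is to prove that any monomorphism $f_0\colon A_0\to A_1$ in $\C$ is a section by extending it to an $(n+2)$-angle and applying Lemma \ref{y1}. First I would use axiom (N1)(c) to embed $f_0$ into an $(n+2)$-angle
$$A_0\xrightarrow{f_0}A_1\xrightarrow{f_1}A_2\xrightarrow{f_2}\cdots\xrightarrow{f_n}A_{n+1}\xrightarrow{f_{n+1}}\Sigma A_0.$$
By Lemma \ref{y1}, it suffices to show $f_{n+1}=0$, or equivalently that $f_0$ is a section. The key observation is that the composition of two consecutive morphisms in an $(n+2)$-angle vanishes (Remark \ref{rem1}(b)); in particular, applying a rotation so that $f_{n+1}$ lands as the first map of an $(n+2)$-angle ending at $A_1$, we get that $f_0\circ(\text{something})$ is relevant. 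More directly, I would argue as follows.

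Apply (N2) to rotate the $(n+2)$-angle until $\Sigma^{-1}f_{n+1}\colon \Sigma^{-1}\Sigma A_0 = A_0 \to \ldots$ — actually the cleaner route is: consider the morphism $\Sigma^{-1}f_{n+1}\colon \Sigma^{-1}\Sigma A_0\to A_0$ (which exists since $\Sigma$ is an auto-equivalence), but it is simpler to work directly. Since in the $(n+2)$-angle the composite $A_{n+1}\xrightarrow{f_{n+1}}\Sigma A_0\xrightarrow{\Sigma f_0}\Sigma A_1$ is zero (this composite appears in the left rotation, which is again an $(n+2)$-angle by (N2), so Remark \ref{rem1}(b) applies), we have $(\Sigma f_0)\circ f_{n+1}=0$. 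As $\Sigma$ is faithful, $f_0\circ(\Sigma^{-1}f_{n+1})=0$. Now $f_0$ is a monomorphism, hence $\Sigma^{-1}f_{n+1}=0$, and therefore $f_{n+1}=0$. By Lemma \ref{y1}, $f_0$ is a section.

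The dual statement — every epimorphism is a retraction — follows by the dual argument, using that the opposite category of an $(n+2)$-angulated category is again $(n+2)$-angulated (with suspension $\Sigma^{-1}$), or equivalently by rotating so that the given epimorphism $f_n$ plays the role of the last map before the suspension and invoking the equivalence of conditions (2) and (3) in Lemma \ref{y1} after showing $f_{n+1}=0$ via the monomorphism-type cancellation on the other side. I do not expect any serious obstacle here; the only mild subtlety is bookkeeping with the signs and the auto-equivalence $\Sigma$ when passing the relation $(\Sigma f_0)\circ f_{n+1}=0$ down to $f_0\circ(\Sigma^{-1}f_{n+1})=0$, but since $\Sigma$ is an equivalence this is automatic.
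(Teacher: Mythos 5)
Your proposal is correct and follows essentially the same route as the paper: extend the monomorphism $f_0$ to an $(n+2)$-angle via (N1)(c), use the rotation axiom together with the vanishing of consecutive compositions to get $f_0\circ\Sigma^{-1}f_{n+1}=0$, cancel the monomorphism to conclude $f_{n+1}=0$, and invoke the splitting criterion (Lemma 2.3) to see that $f_0$ is a section, with the dual statement handled dually. The only difference is presentational: the paper states $f_0\circ\Sigma^{-1}f_{n+1}=0$ directly from Remark 2.2(b), whereas you spell out the intermediate step $(\Sigma f_0)\circ f_{n+1}=0$ via the left rotation and then transport it through the auto-equivalence $\Sigma$, which is a harmless elaboration.
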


\proof Assume that $f_0$ is a monomorphism. By (N1), there exists an $(n+2)$-angle
$$A_0\xrightarrow{f_0}A_1\xrightarrow{f_1}A_2\xrightarrow{f_2}\cdots\xrightarrow{f_{n-1}}A_n\xrightarrow{f_n}A_{n+1}\xrightarrow{f_{n+1}}\Sigma A_0$$
in $\C$. By Remark \ref{rem1}, we have $f_0\circ\Sigma^{-1} f_{n+1}=0$.
Since $f_0$ is a monomorphism, we obtain $\Sigma^{-1} f_{n+1}=0$ and then $f_{n+1}=0$.
By Lemma \ref{y1}, we have that $f_0$ is a section.

Dually we can show that any epimorphism $\C$ is a retraction.  \qed

\subsection{$n$-exangulated categories}
 Suppose that $\C$ is equipped with an additive bifunctor $\E\colon\C\op\times\C\to{\rm Ab}$, where ${\rm Ab}$ is the category of abelian groups. Next we briefly recall some definitions and basic properties of $n$-exangulated categories from \cite{HLN}. We omit some
details here, but the reader can find them in \cite{HLN}.

{ For any pair of objects $A,C\in\C$, an element $\del\in\E(C,A)$ is called an {\it $\E$-extension} or simply an {\it extension}. We also write such $\del$ as ${}_A\del_C$ when we indicate $A$ and $C$. The zero element ${}_A0_C=0\in\E(C,A)$ is called the {\it split $\E$-extension}. For any pair of $\E$-extensions ${}_A\del_C$ and ${}_{A'}\del{'}_{C'}$, let $\delta\oplus \delta'\in\mathbb{E}(C\oplus C', A\oplus A')$ be the
element corresponding to $(\delta,0,0,{\delta}{'})$ through the natural isomorphism $\mathbb{E}(C\oplus C', A\oplus A')\simeq\mathbb{E}(C, A)\oplus\mathbb{E}(C, A')
\oplus\mathbb{E}(C', A)\oplus\mathbb{E}(C', A')$.

For any $a\in\C(A,A')$ and $c\in\C(C',C)$,  $\E(C,a)(\del)\in\E(C,A')\ \ \text{and}\ \ \E(c,A)(\del)\in\E(C',A)$ are simply denoted by $a_{\ast}\del$ and $c^{\ast}\del$, respectively.

Let ${}_A\del_C$ and ${}_{A'}\del{'}_{C'}$ be any pair of $\E$-extensions. A {\it morphism} $(a,c)\colon\del\to{\delta}{'}$ of extensions is a pair of morphisms $a\in\C(A,A')$ and $c\in\C(C,C')$ in $\C$, satisfying the equality
$a_{\ast}\del=c^{\ast}{\delta}{'}$.}
Then the functoriality of $\E$ implies $\E(c,a)=a_{\ast}(c^{\ast}\del)=c^{\ast}(a_{\ast}\del)$.

\begin{definition}\cite[Definition 2.7]{HLN}
Let $\bf{C}_{\C}$ be the category of complexes in $\C$. As its full subcategory, define $\CC$ to be the category of complexes in $\C$ whose components are zero in the degrees outside of $\{0,1,\ldots,n+1\}$. Namely, an object in $\CC$ is a complex $X^{\mr}=\{X_i,d^X_i\}$ of the form
\[ X_0\xrightarrow{d^X_0}X_1\xrightarrow{d^X_1}\cdots\xrightarrow{d^X_{n-1}}X_n\xrightarrow{d^X_n}X_{n+1}. \]
We write a morphism $f^{\mr}\co X^{\mr}\to Y^{\mr}$ simply $f^{\mr}=(f^0,f^1,\ldots,f^{n+1})$, only indicating the terms of degrees $0,\ldots,n+1$.
\end{definition}

\begin{definition}\cite[Definition 2.23]{HLN}
Let $\s$ be an exact realization of $\E$.
\begin{enumerate}
\item[\rm (1)] An $n$-exangle $\Xd$ is called an $\s$-{\it distinguished} $n$-exangle if it satisfies $\s(\del)=[X^{\mr}]$. We often simply say {\it distinguished $n$-exangle} when $\s$ is clear from the context.
\item[\rm (2)]  An object $X^{\mr}\in\CC$ is called an {\it $\s$-conflation} or simply a {\it conflation} if it realizes some extension $\del\in\E(X_{n+1},X_0)$.
\item[\rm (3)]  A morphism $f$ in $\C$ is called an {\it $\s$-inflation} or simply an {\it inflation} if it admits some conflation $X^{\mr}\in\CC$ satisfying $d_X^0=f$.
\item[\rm (4)]  A morphism $g$ in $\C$ is called an {\it $\s$-deflation} or simply a {\it deflation} if it admits some conflation $X^{\mr}\in\CC$ satisfying $d_X^n=g$.
\end{enumerate}
\end{definition}

\begin{definition}\cite[Definition 2.32]{HLN}
An {\it $n$-exangulated category} is a triplet $(\C,\E,\s)$ of additive category $\C$, additive bifunctor $\E\co\C\op\times\C\to\Ab$, and its exact realization $\s$, satisfying the following conditions.
\begin{itemize}[leftmargin=3.3em]
\item[{\rm (EA1)}] Let $A\ov{f}{\lra}B\ov{g}{\lra}C$ be any sequence of morphisms in $\C$. If both $f$ and $g$ are inflations, then so is $g\circ f$. Dually, if $f$ and $g$ are deflations, then so is $g\circ f$.

\item[{\rm (EA2)}] For $\rho\in\E(D,A)$ and $c\in\C(C,D)$, let ${}_A\langle X^{\mr},c\uas\rho\rangle_C$ and ${}_A\Yr_D$ be distinguished $n$-exangles. Then $(\id_A,c)$ has a {\it good lift} $f^{\mr}$, in the sense that its mapping cone gives a distinguished $n$-exangle $\langle M^{\mr}_f,(d^X_0)\sas\rho\rangle$.
\end{itemize}
\begin{itemize}[leftmargin=4.3em]
\item[{\rm (EA2$\op$)}] Dual of {\rm (EA2)}.
\end{itemize}
Note that the case $n=1$, a triplet $(\C,\E,\s)$ is a  $1$-exangulated category if and only if it is an extriangulated category, see \cite[Proposition 4.3]{HLN}.
\end{definition}

\begin{example}
From \cite[Proposition 4.34]{HLN} and \cite[Proposition 4.5]{HLN},  we know that $n$-exact categories and $(n+2)$-angulated categories are $n$-exangulated categories.
There are some other examples of $n$-exangulated categories
 which are neither $n$-exact nor $(n+2)$-angulated, see \cite{HLN,HLN1,LZ,HZZ}.
\end{example}

\section{Main result}
In this section, when we say that $\A$ is a subcategory of an additive category $\C$, we always assume that $\A$ is full, and closed under isomorphisms, direct sums and direct summands.

An additive category $\C$ is said to be  {\em idempotent complete} if for each object $A$ in $\C$ and for each idempotent $e:A\rightarrow A$, we have $A=\Im(e)\oplus \Ker(e)$.

\begin{definition}\cite[Definition 1.2]{BM}
 Let $\C$ be an additive category. The {\em idempotent completion} of $\C$ is the category $\widetilde{\C}$ defined as follows:

  Objects of $\widetilde{\C}$ are pairs $\widetilde{A}=(A,e_{a})$, where $A$ is an object of $\C$ and $e_{a}: A\rightarrow A$ is an idempotent morphism.

  A morphism in $\widetilde{\C}$ from $(A,e_{a})$ to $(B,e_{b})$ is a morphism $p: A\rightarrow B$ in $\C$ such that $p e_{a}=e_{b}p=p$. That is to say, we have the following commutative diagram
\[
 \begin{tikzpicture}
 \draw (0,0) node{\xymatrix{
 A \ar[r]^{{e_{a}}} \ar[dr]|{p}\ar@{}[dr] \ar[d]_{p} & A \ar[d]^{p}\\
 B  \ar[r]_{e_{b}} &B.
}};
\draw (0.2,0.2) node{\tiny $\circlearrowright$};
\draw (-0.2,-0.2) node{\tiny $\circlearrowleft$};
 \end{tikzpicture}
\]
\end{definition}

\begin{remark}\label{re0}
\begin{enumerate}
\item[\rm (1)] The assignment $A\mapsto (A,1)$ defines a fully faithful additive functor $\iota: \C\rightarrow\widetilde{\C}$. Namely, we can view $\C$ as a full subcategory of $\widetilde{\C}$.
\item[\rm (2)] For each object $X\in\widetilde{\C }$, there exists an object $X'\in\widetilde{\C}$ such that $X\oplus X'\in\C$. In fact, if $X=(A,e)$, then we can take $X'=(A,1-e)$ and $X\oplus X'\cong A\in\C$.

\item[\rm (3)] From the definition,  it is obvious that if $\C$ is Krull-Schmidt,
then $\widetilde{\C}$ is also Krull-Schmidt.

\end{enumerate}
\end{remark}
\begin{definition}\cite[Definition 3.6]{L1}
Let $(\C,\Sigma,\Theta)$ be an $(n+2)$-angulated category. A subcategory $\A$ of $\C$ is called \emph{$n$-extension closed} if
 for each morphism $f_{n+1}\colon A_{n+1}\to \Sigma A_0$ with $A_0,A_{n+1}\in\A$,
there exists an $(n+2)$-angle
$$A_0\xrightarrow{f_0}A_1\xrightarrow{f_1}A_2\xrightarrow{f_2}\cdots\xrightarrow{f_{n-1}}A_n\xrightarrow{f_n}A_{n+1}\xrightarrow{f_{n+1}}\Sigma A_0$$
with terms $A_1,A_2,\cdots,A_n\in\A$.
\end{definition}

\begin{remark}\label{remark2}
(a) From the definition, we know that $\A$ is not necessarily closed under $\Sigma$.
That is, if $A\in\A$, we don't require  $\Sigma A\in\A$ or $\Sigma^{-1}A\in\A$.

(b) Any $(n+2)$-angulated category can be seen as an $n$-extension closed subcategory of itself.
\end{remark}

The following two key lemmas are essentially contained in \cite{L2}. We provide a proof for the reader's convenience.

Let $\C$ be an additive category with an automorphism $\Sigma\colon \C\to \C$ and $\Theta$ be a class of $(n+2)$-$\Sigma$-sequences.

\begin{lemma}\label{y0}
Suppose that $\Theta$ satisfies {\rm (N1)(b), (N2), (N3)} and the $(n+2)$-$\Sigma$-sequence
$$A_0\xrightarrow{f_0}A_1\xrightarrow{f_1}A_2\xrightarrow{f_2}\cdots\xrightarrow{f_{n-1}}A_n\xrightarrow{f_n}A_{n+1}\xrightarrow{f_{n+1}}\Sigma A_0$$
in $\Theta$. Then the following statements hold.

{\rm (1)} $f_{n+1}f_{n}=0$.

{\rm (2)} If $f_{n+1}g_{n+1}=0$ for some morphism $g_{n+1}:B_{n+1}\rightarrow A_{n+1}$, then there exists a morphism $h:B_{n+1}\rightarrow A_{n}$ such that $g_{n+1}=f_{n}h$.
\end{lemma}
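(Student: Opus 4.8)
The plan is to use the standard ``trivial-sequence trick''. In both parts one compares a suitable iterated left rotation of the given $(n+2)$-$\Sigma$-sequence with an iterated left rotation of a trivial sequence — the latter lies in $\Theta$ by (N1)(b) together with (N2) — completes the comparison to a morphism of $(n+2)$-$\Sigma$-sequences by (N3), and then reads off the required identity from the commutativity of one particular square of that morphism. The point is that this square must involve a morphism \emph{produced} by (N3), not one of the two morphisms fed into it.

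For (1), apply (N2) $n$ times to rotate the given sequence to a sequence in $\Theta$ of the shape
$$A_n\xrightarrow{f_n}A_{n+1}\xrightarrow{f_{n+1}}\Sigma A_0\to\cdots\to\Sigma A_n,$$
whose first two morphisms are exactly $f_n$ and $f_{n+1}$. By (N1)(b) the trivial sequence $A_n\xrightarrow{1_{A_n}}A_n\to 0\to\cdots\to 0\to\Sigma A_n$ lies in $\Theta$. Feed into (N3) the commuting left-hand square with $\varphi_0=1_{A_n}$ and $\varphi_1=f_n$; the produced morphisms $\varphi_2,\dots,\varphi_{n+1}$ are forced to be zero (their sources being zero objects), and the commutativity of the square in degrees $1$ and $2$ of the resulting morphism of $(n+2)$-$\Sigma$-sequences reads $f_{n+1}\circ f_n=\varphi_2\circ 0=0$.

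For (2), apply (N2) $n+1$ times to rotate the given sequence to a sequence in $\Theta$ of the shape
$$A_{n+1}\xrightarrow{f_{n+1}}\Sigma A_0\to\cdots\to\Sigma A_n\xrightarrow{(-1)^n\Sigma f_n}\Sigma A_{n+1}.$$
By (N1)(b) and (N2), the left rotation of the trivial sequence on $B_{n+1}$, namely
$$B_{n+1}\xrightarrow{0}0\to\cdots\to 0\to\Sigma B_{n+1}\xrightarrow{(-1)^n 1_{\Sigma B_{n+1}}}\Sigma B_{n+1},$$
also lies in $\Theta$. Since $f_{n+1}\circ g_{n+1}=0$ by hypothesis, the pair $\varphi_0:=g_{n+1}\colon B_{n+1}\to A_{n+1}$ and $\varphi_1:=0$ constitutes a commuting left-hand square from the second sequence to the first, so by (N3) it extends to a morphism $(\varphi_0,\varphi_1,\dots,\varphi_{n+1})$ of $(n+2)$-$\Sigma$-sequences in which $\varphi_i=0$ for $1\le i\le n$ and $\varphi_{n+1}\colon\Sigma B_{n+1}\to\Sigma A_n$. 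The commutativity of the last square of this morphism — whose right-hand vertical is $\Sigma\varphi_0=\Sigma g_{n+1}$ — gives $((-1)^n\Sigma f_n)\circ\varphi_{n+1}=(\Sigma g_{n+1})\circ((-1)^n 1_{\Sigma B_{n+1}})$, that is, $(\Sigma f_n)\circ\varphi_{n+1}=\Sigma g_{n+1}$. Since $\Sigma$ is an automorphism, put $h:=\Sigma^{-1}\varphi_{n+1}\colon B_{n+1}\to A_n$; then $\Sigma(f_n\circ h)=\Sigma g_{n+1}$, whence $g_{n+1}=f_n\circ h$, as desired.

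The routine part is the index and sign bookkeeping for the iterated rotations, which can be organized once and for all and is uniform in $n\ge 1$. The one genuine subtlety — and what makes the scheme work — is that $h$ is recovered from the morphism $\varphi_{n+1}$ \emph{produced} by (N3): the whole comparison is arranged precisely so that the ``wrap-around'' square of the resulting morphism of $(n+2)$-$\Sigma$-sequences is a $\Sigma$-shift of the sought relation $g_{n+1}=f_n\circ h$, with the two $(-1)^n$ signs appearing on that square cancelling.
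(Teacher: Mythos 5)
Your proof is correct and uses essentially the same device as the paper's: comparison with a (rotated) trivial sequence via (N1)(b), (N2) and (N3), reading the desired identity off one commuting square of the morphism that (N3) produces. You merely make explicit the rotations, the sign bookkeeping, and the recovery of $h$ as $\Sigma^{-1}$ of the produced component, all of which the paper's diagrams leave implicit.
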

\begin{proof}
(1) By (N1)(b), (N2) and (N3), we have the following commutative diagram of $(n+2)$-$\Sigma$-sequence
$$\xymatrix{
A_0 \ar[r]^{f_0}\ar@{=}[d]^{} & A_1 \ar[r]^{f_1}\ar[d]^{} & A_2 \ar[r]^{f_2}\ar@{-->}[d]^{} & \cdots \ar[r]^{f_{n-1}}&A_{n}\ar@{-->}[d]^{}\ar[r]^{f_{n}} & A_{n+1} \ar[r]^{f_{n+1}}\ar@{-->}[d]^{f_{n+1}} & \Sigma A_0 \ar@{=}[d]^{}\\
A_0 \ar[r]^{} &0 \ar[r]^{} & 0 \ar[r]^{} & \cdots \ar[r]^{}& 0\ar[r]^{}& \Sigma A_0 \ar[r]^{\id}& \Sigma A_0.
}$$ So we have $f_{n+1}f_{n}=0$.

(2) Since $f_{n+1}g_{n+1}=0$, by (N1)(b), (N2) and (N3), we have the following commutative diagram of $(n+2)$-$\Sigma$-sequence
$$\xymatrix{
0 \ar[r]^{}\ar[d]^{} & 0 \ar[r]^{}\ar@{-->}[d]^{} & 0 \ar[r]^{}\ar@{-->}[d]^{} & \cdots \ar[r]^{}& B_{n+1}\ar@{-->}[d]^{h}\ar[r]^{\id} & B_{n+1} \ar[r]^{}\ar[d]^{g_{n+1}} & 0 \ar[d]^{}\\
A_0 \ar[r]^{f_0} & A_1 \ar[r]^{f_1}& A_2 \ar[r]^{f_2} & \cdots \ar[r]^{f_{n-1}}&A_{n}\ar[r]^{f_{n}} & A_{n+1} \ar[r]^{f_{n+1}} & \Sigma A_0 .
}$$ Hence there exists a morphism $h:B_{n+1}\rightarrow A_{n}$ such that $g_{n+1}=f_{n}h$.
\end{proof}

\begin{lemma}\label{y1}
Suppose that $\C$ is idempotent complete, $\Theta$ satisfies {\rm (N1)(b), (N2), (N3)} and
$$A_{\bullet}=(A_0\xrightarrow{f_0}A_1\xrightarrow{f_1}\cdots\xrightarrow{f_{n-2}}A_{n-1}\xrightarrow{f_{n-1}}A_n\xrightarrow{\binom{f_n}{g_n}}A_{n+1}\oplus B_{n+1}\xrightarrow{(f_{n+1},\hspace{0.8mm} g_{n+1})}\Sigma A_0)\in\Theta.$$

If $g_{n+1}=0$, then $A_{\bullet}\simeq A'_{\bullet}\oplus B_{\bullet}$, where
$$A'_{\bullet}=(A_0\xrightarrow{f_0}A_1\xrightarrow{f_{1}}\cdots\xrightarrow{f_{n-2}}A_{n-1}\xrightarrow{m}A'_n\xrightarrow{k_1}A_{n+1}\xrightarrow{f_{n+1}}\Sigma A_0)$$
and
$$B_{\bullet}=(0\xrightarrow{}0\xrightarrow{}0\xrightarrow{}\cdots\xrightarrow{}B_{n+1}\xrightarrow{\id}B_{n+1}\xrightarrow{}0).$$

\end{lemma}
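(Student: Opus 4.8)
The plan is to produce an explicit splitting idempotent on $A_\bullet$ whose image is $B_\bullet$, and then invoke that $\Theta$ is closed under the relevant operations (direct sums and summands follow once we know the complement $A'_\bullet$ is again in $\Theta$, but in fact we only need an isomorphism of $(n+2)$-$\Sigma$-sequences, so closure under isomorphism — part of (N1)(a), or rather the weaker hypotheses here — is what is used). First I would observe that since $g_{n+1} = 0$, the last map is $(f_{n+1},0)\colon A_{n+1}\oplus B_{n+1}\to \Sigma A_0$, and the second-to-last map is $\binom{f_n}{g_n}\colon A_n\to A_{n+1}\oplus B_{n+1}$. Applying Lemma \ref{y0}(1) to $A_\bullet$ gives $(f_{n+1},0)\circ\binom{f_n}{g_n} = f_{n+1}f_n = 0$. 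The point is to peel off the summand $B_{n+1}$ sitting in the copy $0\to\cdots\to B_{n+1}\xrightarrow{\id}B_{n+1}\to 0$.

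The key step is to construct the retraction $A_{n+1}\oplus B_{n+1}\to A'_n$ and the section $A'_n\to A_{n+1}\oplus B_{n+1}$ realizing $A'_n$ as a summand. Here is where idempotent completeness enters: I would consider the composite idempotent-type map $e = \binom{f_n}{g_n}$ does not directly give an idempotent, so instead I would argue as follows. Since $g_{n+1} = 0$, the identity $\id_{B_{n+1}}\colon B_{n+1}\to B_{n+1}$ read as a map into the $B_{n+1}$-component composes with $(f_{n+1},0)$ to zero; by Lemma \ref{y0}(2) applied with the morphism $\binom{0}{\id}\colon B_{n+1}\to A_{n+1}\oplus B_{n+1}$, there is $h\colon B_{n+1}\to A_n$ with $\binom{f_n}{g_n}h = \binom{0}{\id}$, i.e. $f_n h = 0$ and $g_n h = \id_{B_{n+1}}$. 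Thus $g_n\colon A_n\to B_{n+1}$ is a split epimorphism with splitting $h$, so $p := h g_n\colon A_n\to A_n$ is an idempotent. By idempotent completeness of $\C$, write $A_n = A'_n\oplus B_{n+1}$ with $A'_n = \Ker(p) = \Im(\id - p)$ and the $B_{n+1}$-summand identified via $h, g_n$. Because $f_n h = 0$ we get $f_n = f_n(\id-p)$, so $f_n$ factors as $A_n\twoheadrightarrow A'_n\xrightarrow{k_1}A_{n+1}$; and because $f_{n-1}$ satisfies $\binom{f_n}{g_n}f_{n-1}$... wait, we need $g_n f_{n-1} = 0$, which follows from $\binom{f_n}{g_n}\circ f_{n-1}$ being two consecutive maps in $A_\bullet$ — hence zero by Lemma \ref{y0}(1) applied after a left rotation, or more simply since $A_\bullet$ is a complex. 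So $f_{n-1}$ factors through $\Ker(g_n)\supseteq \Ker(p)\cdot(\ldots)$; more carefully, $(\id-p)f_{n-1} = f_{n-1} - h g_n f_{n-1} = f_{n-1}$, so $f_{n-1}$ lands in $A'_n$, giving the map $m\colon A_{n-1}\to A'_n$ with $(\id-p)$-inclusion recovering $f_{n-1}$.

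Having set up the decomposition $A_n\cong A'_n\oplus B_{n+1}$, I would then write down the isomorphism $\varphi_\bullet\colon A_\bullet\xrightarrow{\sim} A'_\bullet\oplus B_\bullet$ of $(n+2)$-$\Sigma$-sequences: take $\varphi_i = \id$ for $i\neq n$, and $\varphi_n = \binom{\id - p}{g_n}\colon A_n\to A'_n\oplus B_{n+1}$ with inverse given by $(\iota, h)$ where $\iota\colon A'_n\hookrightarrow A_n$. Then one checks commutativity of each square: at position $n-1\to n$ it reads $\varphi_n f_{n-1} = \binom{m}{0}$, which holds since $g_n f_{n-1} = 0$; at position $n\to n+1$ it reads $(\binom{k_1}{0}, \binom{0}{\id})\circ\varphi_n$... i.e. $\binom{f_n}{g_n} = $ the composite, using $f_n = k_1(\id-p)$ and $g_n = g_n$; and the square at $n+1\to\Sigma A_0$ is automatic since $\varphi_{n+1} = \id$ and the last map of $A'_\bullet\oplus B_\bullet$ is $(f_{n+1},0)$. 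The main obstacle I anticipate is purely bookkeeping: correctly identifying the maps $m$ and $k_1$ through the splitting and verifying that the middle two squares commute with the chosen $\varphi_n$; there is no deep structural difficulty once Lemma \ref{y0}(2) supplies the splitting $h$ of $g_n$. One subtlety worth care: we must check $k_1 m = f_n f_{n-1} \cdot(\text{stuff})$, i.e. that $A'_\bullet$ is genuinely a complex, which follows from $A_\bullet$ being a complex by restricting along the summand inclusion; no appeal to $A'_\bullet\in\Theta$ is needed for the stated isomorphism.
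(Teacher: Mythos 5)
Your proposal is correct and follows essentially the same route as the paper: both apply Lemma \ref{y0}(2) to the inclusion $\binom{0}{1}\colon B_{n+1}\to A_{n+1}\oplus B_{n+1}$ to obtain a splitting $h$ of $g_n$ with $f_nh=0$, form the idempotent $hg_n$, use idempotent completeness to decompose $A_n$, and write down the explicit isomorphism of $(n+2)$-$\Sigma$-sequences. The only difference is cosmetic: you use $f_nh=0$ to see directly that the off-diagonal entry of the decomposed map $A'_n\oplus B_{n+1}\to A_{n+1}\oplus B_{n+1}$ vanishes, whereas the paper carries a possibly nonzero entry $k_2$, shows $f_{n+1}k_2=0$, and removes it by a second application of Lemma \ref{y0}(2) via a change of basis $\left(\begin{smallmatrix}1&a\\0&k_3\end{smallmatrix}\right)$.
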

\begin{proof}
Since $(f_{n+1}, g_{n+1})\binom{0}{1}=g_{n+1}=0,$ by Lemma \ref{y0} (2), there exists a morphism $g'_{n+1}:B_{n+1}\rightarrow A_{n}$, such that $\binom{0}{1}=\binom{f_{n}}{g_{n}}g'_{n+1}$, so we have $g_{n}g'_{n+1}=1$. Since $(g'_{n+1}g_{n})^{2}=g'_{n+1}g_{n}$ and $C$ is idempotent complete, we can write $A_{n}=A'_{n}\oplus A''_{n}$ and
$$A_{\bullet}=(A_0\xrightarrow{f_0}A_1\xrightarrow{f_1}\cdots\xrightarrow{f_{n-2}}A_{n-1}\xrightarrow{\binom{m}{l}}A'_{n}\oplus A''_{n}\xrightarrow{\left(
                                                           \begin{smallmatrix}
                                                             k_{1} & k_{2} \\
                                                             0 & k_{3} \\
                                                           \end{smallmatrix}
                                                         \right)}A_{n+1}\oplus B_{n+1}\xrightarrow{(f_{n+1}, 0)}\Sigma A_0)\in\Theta,$$
where $k_{3}$ is an isomorphism. By Lemma \ref{y0} (1) and (N2), we have $l=0$ and $f_{n+1}k_2=0$. Since $(f_{n+1}, 0)\binom{k_2}{0}=0,$ there exists a morphism $\binom{a}{b}:A''_{n}\rightarrow A'_{n}\oplus A''_{n}$ such that $\binom{k_2}{0}=\left(
                                                           \begin{smallmatrix}
                                                             k_{1} & k_{2} \\
                                                             0 & k_{3} \\
                                                           \end{smallmatrix}
                                                         \right)\binom{a}{b}$ by Lemma \ref{y0} (2).
Thus $b=0$ and $k_1a=k_2$, so we have the following commutative diagram
$$\xymatrix{
A_0 \ar[r]^{f_0}\ar@{=}[d]^{} & A_1 \ar[r]^{f_1}\ar@{=}[d]^{} & \cdots \ar[r]^{f_{n-2}}&A_{n-1}\ar@{=}[d]^{}\ar[r]^{\binom{m}{0}} &A'_{n}\oplus A''_{n}\ar[d]^{\left(
                                                           \begin{smallmatrix}
                                                             1 & a \\
                                                             0 & k_{3} \\
                                                           \end{smallmatrix}
                                                         \right)}\ar[r]^{\left(
                                                           \begin{smallmatrix}
                                                             k_{1} & k_{2} \\
                                                             0 & k_{3} \\
                                                           \end{smallmatrix}
                                                         \right)} & A_{n+1}\oplus B_{n+1} \ar[r]^{(f_{n+1},0)}\ar@{=}[d]^{} & \Sigma A_0 \ar@{=}[d]^{}\\
A_0 \ar[r]^{f_0} &A_1  \ar[r]^{f_1} & \cdots \ar[r]^{f_{n-2}}&A_{n-1}\ar[r]^{\binom{m}{0}} & A'_{n}\oplus B_{n+1}\ar[r]^{\left(
                                                           \begin{smallmatrix}
                                                             k_{1} & 0 \\
                                                             0 & 1 \\
                                                           \end{smallmatrix}
                                                         \right)}& A_{n+1}\oplus B_{n+1} \ar[r]^{(f_{n+1},0)}& \Sigma A_0
}$$
which shows that $A_{\bullet}\simeq A'_{\bullet}\oplus B_{\bullet}$.

\end{proof}

Let $(\C,\Sigma,\Theta)$ be an $(n+2)$-angulated category. Since $\Sigma\colon\C\xrightarrow{~\simeq~}\C$ is an  automorphism, then
$\Sigma$ gives an additive bifunctor
$$\E_{\Sigma}=\C(-,\Sigma-)\colon \C^{\rm op}\times \C\to {\rm Ab},$$
defined by the following.
\begin{itemize}
\item[\rm (i)] For any $A,C\in\C$, $\E_{\Sigma}(C, A)=\C(C,\Sigma A)$;

\item[\rm (ii)] For any $a\in\C(A,A')$ and $c\in\C(C',C)$, the map $\E_{\Sigma}(c, a)\colon\C(C, \Sigma A)\to \C(C', \Sigma A')$
sends $\delta\in\C(C, \Sigma A)$ to $c^{\ast}a_{\ast}\delta=(\Sigma a)\circ\delta\circ c$.
\end{itemize}

For each $\delta\in\E_{\Sigma}(C, A)$, we complete
it into an $(n+2)$-angle
$$A\xrightarrow{f_0}X_1\xrightarrow{f_1}X_2\xrightarrow{f_2}\cdots\xrightarrow{f_{n-1}}X_n\xrightarrow{f_n}C\xrightarrow{\delta}\Sigma A_0$$
Define $\s_{\Theta}(\delta)=[X^{\mr}]$ by using $X^{\mr}\in{\mathbf{C}^{n+2}_{(A,\hspace{0.8mm}C)}}$ given by
$$A\xrightarrow{f_0}X_1\xrightarrow{f_1}X_2\xrightarrow{f_2}\cdots\xrightarrow{f_{n-1}}X_n\xrightarrow{f_n}C$$

The following result shows that any $(n+2)$-angulated category can be viewed as
an $n$-exangulated category.
\begin{theorem}{\rm \cite[Proposition 4.5]{HLN}}
With the above definition, $(\C,\E_{\Sigma},\s_{\Theta})$ is an $n$-exangulated category.
\end{theorem}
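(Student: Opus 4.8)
The plan is to unwind the definition of an $n$-exangulated category and to verify each ingredient using only the axioms (N1)--(N4). Biadditivity of $\E_\Sigma=\C(-,\Sigma-)$ is immediate, since $\Sigma$ is an additive automorphism. For $\s_\Theta$ I would first check that it is well defined: given $\del\in\C(C,\Sigma A)$, after a rotation by (N2) so that $\del$ is the last morphism of the datum, (N1)(c) completes $\del$ to an $(n+2)$-angle $A\to X_1\to\cdots\to X_n\to C\xrightarrow{\del}\Sigma A$, and one sets $\s_\Theta(\del)=[X^{\mr}]$ for the truncation $X^{\mr}\in\mathbf{C}^{n+2}_{(A,C)}$; any two such $(n+2)$-angles with the same outer terms are linked, by (N3) (after rotating so that $\del$ sits in first position), by a morphism of $(n+2)$-angles that is the identity at both ends, hence an isomorphism by the higher five lemma, so the two truncations agree in $\mathbf{C}^{n+2}_{(A,C)}$. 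Additivity $\s_\Theta(\del\oplus\del')=\s_\Theta(\del)\oplus\s_\Theta(\del')$ comes from (N1)(a). That each $\langle X^{\mr},\del\rangle$ is an $n$-exangle in the technical sense, and that morphisms of extensions lift to morphisms of the realizing complexes, then follow from (N3) together with the long exact sequences obtained by applying $\C(W,-)$ and $\C(-,W)$ to an $(n+2)$-angle --- the standard consequence of (N1)(b), (N2), (N3) already recorded in Lemma \ref{y0}.

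Next, (EA1) is trivial in this setting: by (N1)(c) every morphism $f\colon A\to B$ in $\C$ arises as the morphism $d^0_X$ of the truncation of some $(n+2)$-angle, so $f$ is an $\s_\Theta$-inflation, and dually, using (N1)(c) and a rotation by (N2), $f$ is an $\s_\Theta$-deflation. In particular any composite of inflations (respectively, of deflations) is again one, so there is nothing to prove.

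The substantive point is (EA2). Fix $\rho\in\E_\Sigma(D,A)=\C(D,\Sigma A)$ and $c\in\C(C,D)$. A distinguished $n$-exangle ${}_A\langle X^{\mr},c\uas\rho\rangle_C$ is the truncation of an $(n+2)$-angle $A\xrightarrow{f_0}X_1\to\cdots\to X_n\to C\xrightarrow{\rho c}\Sigma A$ and ${}_A\langle Y^{\mr},\rho\rangle_D$ the truncation of $A\to Y_1\to\cdots\to Y_n\to D\xrightarrow{\rho}\Sigma A$; since $\rho\circ c=\rho c$, the square relating their last morphisms commutes. I would apply the right rotation of (N2) once to both $(n+2)$-angles, which moves the pair $(\id_A,c)$ into the position required by (N3); then (N3) produces a morphism between the rotated $(n+2)$-angles, and (N4) allows it to be chosen so that its mapping cone lies in $\Theta$. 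Rotating back, this is exactly a good lift $f^{\mr}\co X^{\mr}\to Y^{\mr}$ of $(\id_A,c)$. It then remains to identify the mapping-cone $(n+2)$-angle: after removing a trivial direct summand by a change of basis as in the proof of Lemma \ref{y1}, it becomes
$$X_1\to X_2\oplus Y_1\to\cdots\to X_n\oplus Y_{n-1}\to C\oplus Y_n\xrightarrow{(\Sigma f_0)\circ\rho}\Sigma X_1,$$
whose truncation realizes $(d^X_0)\sas\rho=(\Sigma f_0)\circ\rho$. Since $\Theta$ is closed under direct summands by (N1)(a), $\langle M^{\mr}_f,(d^X_0)\sas\rho\rangle$ is a distinguished $n$-exangle, which is (EA2). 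The axiom (EA2$\op$) follows dually, using the left rotation of (N2) and the same instance of (N4).

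I expect the main obstacle to be this last computation in (EA2): one must match the mapping cone of a morphism of complexes in $\mathbf{C}^{n+2}_\C$ (in the sense of \cite{HLN}) with the mapping cone appearing in (N4) after the rotation, keep careful track of the signs introduced by (N2), and verify that after peeling off the trivial summand the connecting morphism is exactly $(d^X_0)\sas\rho$ rather than merely isomorphic to it. Once the correct rotation and change of basis are fixed, the remaining steps are routine diagram chases using (N1)--(N3) and Lemma \ref{y0}.
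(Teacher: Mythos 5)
The paper offers no proof of this statement: it is imported verbatim as \cite[Proposition 4.5]{HLN}, so there is no in-paper argument to compare yours against. Judged on its own terms, your sketch reconstructs the standard argument from \cite{HLN} faithfully: biadditivity of $\E_\Sigma$, well-definedness of $\s_\Theta$ via (N3) plus the five lemma for $(n+2)$-angles, the observation that (EA1) is vacuous because (N1)(c) and (N2) make every morphism of $\C$ simultaneously an inflation and a deflation (this is precisely why the authors of the present paper can ``avoid verifying (EA1)'' only in the ambient category, not in the subcategory $\A$), and (EA2) via rotation, (N3)/(N4), and splitting a contractible summand off the mapping cone using (N1)(a). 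Two small points should be repaired. First, your displayed reduced mapping cone has an indexing slip: it must end in $Y_{n+1}=D$, i.e.\ it should read
$$X_1\to X_2\oplus Y_1\to\cdots\to C\oplus Y_n\to D\xrightarrow{(\Sigma f_0)\circ\rho}\Sigma X_1,$$
since the extension $(d^X_0)\sas\rho$ lives in $\E_\Sigma(D,X_1)=\C(D,\Sigma X_1)$ and the HLN mapping cone $M^{\mr}_f$ is a complex from $X_1$ to $Y_{n+1}$; as written you have dropped the final term and attached the connecting morphism to the wrong object. Second, when you check that $\s_\Theta$ is an \emph{exact} realization you should also record condition (R2) of \cite{HLN} (the split extension ${}_A0_0$ is realized by $A\xrightarrow{\id}A\to 0\to\cdots\to 0$ and its dual), which follows from (N1)(b) and (N2); and note that the contractible piece you split off, $A\to 0\to\cdots\to 0\to\Sigma A\xrightarrow{\id}\Sigma A$, is itself a rotation of a trivial $(n+2)$-angle, so it does lie in $\Theta$ and (N1)(a) applies. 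With these adjustments the outline is a correct, if necessarily sign-sensitive, proof.
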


Let $(\C,\Sigma,\Theta)$ be an $(n+2)$-angulated category and
$\A$ be an $n$-extension closed subcategory of $\C$.
Define $\E_{\A}$ to be the restriction of
$\E_{\Sigma}$ onto $\A^{\rm op}\times\A$, and define $\s_{\A}$ by restricting $\s_{\Theta}$.
Recall that an additive category $\C$ is \emph{Krull-Schmidt} if any object $A$ has a decomposition  $A=A_1\oplus A_2\oplus\cdots\oplus A_n$ such that each $A_i$ is indecomposable with local endomorphism ring.

\begin{theorem}\rm\label{main1}\cite[Theorem 3.4]{Z}
Let $(\C,\Sigma,\Theta)$ be a Krull-Schmidt $(n+2)$-angulated category and
$\A$ be an $n$-extension closed subcategory of $\C$. Then $(\A,\E_{\A},\s_{\A})$
is an $n$-exangulated category.
\end{theorem}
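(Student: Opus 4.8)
The plan is to obtain the $n$-exangulated structure on $\A$ by restriction from that of $(\C,\E_\Sigma,\s_\Theta)$, which is available by \cite[Proposition 4.5]{HLN}, and to invoke $n$-extension closedness exactly at the points where the restricted realization must stay inside $\A$. Since $\E_\A$ is the restriction of the biadditive functor $\E_\Sigma$ to $\A\op\times\A$ and $\A$ is a full additive subcategory closed under direct sums and summands, $\E_\A$ is again a biadditive functor into $\Ab$. So the real content is: (i) $\s_\A$ is a well-defined exact realization of $\E_\A$; and (ii) the axioms (EA1), (EA2) and (EA2$\op$) hold for $(\A,\E_\A,\s_\A)$.

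For (i), I would take $\s_\A$ to be the restriction of $\s_\Theta$. Well-definedness of the underlying class, the realization property, and exactness are already known for $\s_\Theta$, so the extra facts needed are: (a) for every $\del\in\E_\A(C,A)$ the class $\s_\Theta(\del)$ has a representative $X^{\mr}=(A\xrightarrow{f_0}X_1\to\cdots\to X_n\xrightarrow{f_n}C)$ with all terms in $\A$, which is precisely $n$-extension closedness of $\A$ applied to the morphism $\del\colon C\to\Sigma A$; (b) for two such representatives and a morphism $(a,c)\colon\del\to\del'$ of $\E_\A$-extensions, the lifting morphism of complexes furnished by the realization property of $\s_\Theta$ has all components morphisms between objects of $\A$, hence morphisms of $\A$, by fullness; and (c) closure of $\Theta$ under direct sums and summands ((N1)(a)), together with closure of $\A$ under $\oplus$, shows that the class of $\A$-distinguished $n$-exangles inherits the additivity and exactness clauses from $\s_\Theta$. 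Krull-Schmidtness of $\C$ is used here and below wherever one has to split off a direct summand, as in arguments of the type of Lemma \ref{y0}.

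Axioms (EA2) and (EA2$\op$) then follow by restriction. Given $\rho\in\E_\A(D,A)$, $c\in\A(C,D)$ and distinguished $n$-exangles $\langle X^{\mr},c\uas\rho\rangle$ and $\langle Y^{\mr},\rho\rangle$ in $(\A,\E_\A,\s_\A)$, which are then also distinguished in $(\C,\E_\Sigma,\s_\Theta)$, I would apply (EA2) there to obtain a good lift $f^{\mr}\colon X^{\mr}\to Y^{\mr}$ in $\C$. Its components are morphisms between objects of $\A$, hence a morphism of $\A$; the mapping cone $M^{\mr}_f$ has components that are direct sums of terms of $X^{\mr}$ and $Y^{\mr}$, hence objects of $\A$; and it realizes $(d^X_0)\sas\rho$, which lies in $\E_\A$ because both its arguments lie in $\A$. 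So $\langle M^{\mr}_f,(d^X_0)\sas\rho\rangle$ is distinguished in $(\A,\E_\A,\s_\A)$, i.e.\ $(\id_A,c)$ has a good lift inside $\A$; the axiom (EA2$\op$) is dual.

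The step I expect to be the real obstacle is (EA1). Let $f\colon A_0\to A_1$ and $g\colon A_1\to B$ be inflations in $\A$, witnessed by conflations $X^{\mr}$ and $Y^{\mr}$ (with all terms in $\A$) realizing extensions $\del$ and $\rho$. In $\C$ the composite $g\circ f$ is an inflation by (EA1) for $(\C,\E_\Sigma,\s_\Theta)$; equivalently, the octahedral axiom (N4), in the higher form of \cite{BT}, applied to the $(n+2)$-angles underlying $X^{\mr}$ and $Y^{\mr}$ produces an $(n+2)$-angle $A_0\xrightarrow{g\circ f}B\to W_2\to\cdots\to W_{n+1}\xrightarrow{\del''}\Sigma A_0$ realizing $g\circ f$, together with an auxiliary $(n+2)$-angle relating the objects $W_i$ to the cone complexes of $f$ and $g$. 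The task is to replace this by a conflation all of whose terms lie in $\A$: using that auxiliary angle, $n$-extension closedness, and the uniqueness of realizations from step (i), I would argue that $\del''$ can be taken in $\E_\A$ (its two arguments in $\A$); then $n$-extension closedness produces an $(n+2)$-angle $Z^{\mr}$ realizing $\del''$ with $Z_1,\dots,Z_n\in\A$; and since $\s_\Theta(\del'')$ is a well-defined class, $Z^{\mr}$ is isomorphic, via an isomorphism which is the identity at the two outer terms, to the octahedral $(n+2)$-angle, hence transports to a conflation for $g\circ f$ with all terms in $\A$. Thus $g\circ f$ is an inflation in $\A$, and the deflation case is dual. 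The genuinely delicate part is this last bookkeeping --- checking, when $n>1$, that every object thrown up by the higher octahedron for the composite can be brought into $\A$ --- and it is precisely the verification of (EA1) that one would most like to avoid, as the subsequent treatment of the idempotent completion in fact does by reducing directly to the present theorem.
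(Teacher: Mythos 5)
This statement is not proved in the paper at all: it is quoted verbatim from \cite[Theorem 3.4]{Z}, and the authors explicitly design their main argument (Theorem \ref{main}) so as to reduce to this cited result rather than re-verify the axioms --- precisely because, as they remark, the verification of (EA1) is ``complicated and difficult.'' So there is no internal proof to compare against; what can be assessed is whether your blind attempt would constitute a proof of the cited result.

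Your overall framework (restrict $\E_{\Sigma}$ and $\s_{\Theta}$ to $\A$, use $n$-extension closedness to produce representatives with terms in $\A$, and observe that good lifts and mapping cones for (EA2) and (EA2$\op$) stay inside $\A$ by fullness and closure under finite direct sums) is sound, and you correctly identify (EA1) as the crux. But your treatment of (EA1) contains a genuine gap, and it is exactly the hard point. To show $g\circ f$ is an inflation of $(\A,\E_{\A},\s_{\A})$ you need a conflation $A_0\xrightarrow{gf}B\to W_2\to\cdots\to W_{n+1}$ with \emph{all} of $W_2,\dots,W_{n+1}$ in $\A$. The higher octahedron only hands you such a sequence with $W_i\in\C$, and your proposed repair --- ``argue that $\delta''$ can be taken in $\E_{\A}$ (its two arguments in $\A$), then apply $n$-extension closedness to $\delta''$'' --- is circular: the source of $\delta''$ is $W_{n+1}$, which is precisely one of the objects you have not yet brought into $\A$, so $n$-extension closedness cannot be invoked. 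Nor does ``uniqueness of realizations'' help, since that only compares two realizations of a \emph{given} extension with \emph{fixed} endpoints in $\A$. Relatedly, the Krull--Schmidt hypothesis, which is essential in \cite{Z} (and is used there to split off superfluous direct summands of the cone terms and push them into $\A$ by an inductive dévissage), appears in your write-up only as a passing remark, which indicates the mechanism that actually closes this gap has not been located. As it stands, the argument establishes the realization and the lifting axioms but not (EA1), so it does not yet prove the theorem.
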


\begin{remark}
In Theorem \ref{main1}, $(\A,\E_{\A},\s_{\A})$
is an $n$-exangulated category which is neither $n$-exact nor $(n+2)$-angulated \textbf{in general}.
\begin{itemize}
\item[\rm (a)] If $(\A,\E_{\A},\s_{\A})$ is $n$-exact, by \cite[Proposition 4.37]{HLN1}, we know that
any inflation is monomorphism. By Lemma \ref{lemma},
any monomorphism is section. This shows that all distinguished $n$-exangles are split.
This is not possible in general.

\item[(b)]  Since $\A$ is not necessarily closed under $\Sigma$, by \cite[Proposition 4.8]{HLN1},
we know that $(\A,\E_{\A},\s_{\A})$ is not $(n+2)$-angulated in general.

\end{itemize}
\end{remark}

\begin{theorem}\rm\label{main2}\cite[Theorem 3.1]{L2}
Let $(\C,\Sigma,\Theta)$ be an $(n+2)$-angulated category and
$\widetilde{\C}$ the idempotent completion of $\C$. Denote by $\widetilde{\Sigma}$ the isomorphism of $\widetilde{\C}$ induced by the isomorphism $\Sigma$ of $\C$ and by $\widetilde{\Theta}$ the class of $(n+2)$-$\widetilde{\Sigma}$-sequences in $\widetilde{\C}$ that are direct summands of $(n+2)$-${\Sigma}$-sequences in $\Theta$. Then $(\widetilde{\C},\widetilde{\Sigma},\widetilde{\Theta})$
is an  $(n+2)$-angulated category.
\end{theorem}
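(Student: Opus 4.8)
The plan is to check the four axioms (N1)--(N4) for the triple $(\widetilde{\C},\widetilde\Sigma,\widetilde\Theta)$. Throughout I would use that $\widetilde\C$ is additive and idempotent complete, that $\widetilde\Sigma\colon (M,e)\mapsto(\Sigma M,\Sigma e)$ is an automorphism of $\widetilde\C$ restricting to $\Sigma$ along $\iota$, and --- this is the key reduction --- that by Remark \ref{re0}(2) applied in each degree, every $(n+2)$-$\widetilde\Sigma$-sequence $X^{\mr}$ admits a complement $\overline X^{\mr}$ with $X^{\mr}\oplus\overline X^{\mr}$ isomorphic to a sequence lying in $\C$. Combined with the closure of $\Theta$ under direct summands, this yields the transitivity principle I will lean on repeatedly: \emph{any direct summand of a sequence in $\widetilde\Theta$ again lies in $\widetilde\Theta$}, and equivalently $X^{\mr}\in\widetilde\Theta$ if and only if the complement above can be arranged with $X^{\mr}\oplus\overline X^{\mr}\in\Theta$. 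I would also record that left rotation is an invertible, additive operation on $(n+2)$-$\widetilde\Sigma$-sequences (up to the usual sign isomorphism).

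Axioms (N1)(a), (N1)(b) and (N2) are then formal. For (a), closure under isomorphisms, direct sums and direct summands follows from (N1)(a) for $\C$ together with the transitivity principle. For (b), given $(M,e)\in\widetilde\C$ the trivial sequence on $M\in\C$ lies in $\Theta$ and decomposes as the trivial sequence on $(M,e)$ plus the trivial sequence on $(M,1-e)$, so the former lies in $\widetilde\Theta$. For (N2), $X^{\mr}\in\widetilde\Theta$ if and only if its left rotation is, which reduces at once to (N2) for $\Theta$. Next I would prove (N3): given the solid commutative square between the first two terms of $X^{\mr},Y^{\mr}\in\widetilde\Theta$, choose complements so that $\widehat X^{\mr}:=X^{\mr}\oplus\overline X^{\mr}\in\Theta$ and $\widehat Y^{\mr}:=Y^{\mr}\oplus\overline Y^{\mr}\in\Theta$ are sequences in $\C$, extend $\varphi^0,\varphi^1$ by zero on the complements to a commutative square in $\C$ between these $\Theta$-sequences, apply (N3) for $\C$ to get a fill-in $\widehat\varphi^{\mr}$, and finally let $\varphi^i$ be the restriction and corestriction of $\widehat\varphi^i$ along the idempotents of the complexes $\widehat X^{\mr},\widehat Y^{\mr}$ cutting out $X^{\mr},Y^{\mr}$. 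A short computation --- using that the structural inclusions and projections of a summand of a complex are chain maps --- shows $\varphi^{\mr}$ is a morphism of $(n+2)$-$\widetilde\Sigma$-sequences extending the given square.

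Since $\widetilde\Theta$ now satisfies (N1)(b), (N2), (N3) and $\widetilde\C$ is idempotent complete, Lemma \ref{y0} and Lemma \ref{y1} (together with its dual, obtained by rotating) apply verbatim to $\widetilde\Theta$, and I would deduce (N1)(c) from them. Given $f_0\colon A_0\to A_1$ in $\widetilde\C$, write $A_0=(M_0,e_0)$, $A_1=(M_1,e_1)$ and view $f_0$ as an honest morphism $M_0\to M_1$ in $\C$; complete it in $\C$ to an $(n+2)$-angle in $\Theta\subseteq\widetilde\Theta$. Under the identifications $M_0\cong A_0\oplus(M_0,1-e_0)$ and $M_1\cong A_1\oplus(M_1,1-e_1)$ this $(n+2)$-angle has first morphism of block form $\left(\begin{smallmatrix}f_0&0\\0&0\end{smallmatrix}\right)$; after suitable rotations, applying Lemma \ref{y1} (and its dual) twice peels off the two split summands supported on $(M_0,1-e_0)$ and on $(M_1,1-e_1)$, leaving a sequence in $\widetilde\Theta$ whose first morphism is precisely $f_0$, which is (N1)(c).

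The main obstacle is (N4). In the situation of (N3) I would again complement the two rows up to $\Theta$-sequences $\widehat X^{\mr},\widehat Y^{\mr}$ in $\C$, extend $\varphi^0,\varphi^1$ by zero, apply (N4) for $\C$ to obtain a fill-in $\widehat\varphi^{\mr}$ whose mapping cone $M^{\mr}_{\widehat\varphi}$ lies in $\Theta$, and then let $\psi^{\mr}$ be the ``block-diagonal part'' of $\widehat\varphi^{\mr}$ with respect to the decompositions $\widehat X^i=X^i\oplus\overline X^i$, $\widehat Y^i=Y^i\oplus\overline Y^i$. One checks that $\psi^{\mr}$ is automatically a morphism of complexes (its failure to be one would be simultaneously block-diagonal and off-diagonal, hence zero), that it still extends $\varphi^0,\varphi^1$, and that it splits as $\varphi^{\mr}\oplus\overline\varphi^{\mr}$ with $\varphi^{\mr}\colon X^{\mr}\to Y^{\mr}$; consequently its mapping cone splits as $M^{\mr}_{\psi}=M^{\mr}_{\varphi}\oplus M^{\mr}_{\overline\varphi}$, and $\varphi^{\mr}$ is the candidate fill-in. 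The crux --- and the step demanding the most care --- is to see that $M^{\mr}_{\psi}$ still lies in $\Theta$: since $\widehat\varphi^{\mr}$ and $\psi^{\mr}$ differ by a morphism of $(n+2)$-$\Sigma$-sequences vanishing in degrees $0$ and $1$, and any such morphism between $\Theta$-sequences is null-homotopic (by an inductive argument using that each representable functor turns an $(n+2)$-angle into a long exact sequence --- equivalently, the factorization statements underlying Lemmas \ref{y0}, \ref{y1}), the homotopy invariance of mapping cones of $(n+2)$-$\Sigma$-sequences yields $M^{\mr}_{\psi}\cong M^{\mr}_{\widehat\varphi}\in\Theta$. Hence $M^{\mr}_{\varphi}$ is a direct summand of a sequence in $\Theta$, so $M^{\mr}_{\varphi}\in\widetilde\Theta$, which establishes (N4). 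Having verified (N1)--(N4), $(\widetilde{\C},\widetilde\Sigma,\widetilde\Theta)$ is an $(n+2)$-angulated category.
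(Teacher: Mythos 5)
First, a point of reference: the paper does not prove this statement --- it is quoted from Lin \cite[Theorem 3.1]{L2} and used as a black box --- so there is no in-paper proof to compare against. Judged on its own merits, your verification of (N1)(a), (N1)(b), (N2) and (N3) is correct, and your derivation of (N1)(c) by complementing, completing in $\C$, and peeling off the two split summands is sound and is exactly the mechanism the paper itself uses (via Lemma \ref{y1}) in the proof of Theorem \ref{main}. One small inaccuracy there: the second peeling needs the \emph{dual} of Lemma \ref{y1} (hypothesis: the differential \emph{into} a term misses a summand), which is obtained by passing to $\C\op$, not by rotation --- rotation moves the position but does not convert a ``kills a summand of the source'' hypothesis into a ``misses a summand of the target'' one.

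The genuine gap is in (N4), precisely at the step you flag as the crux. You set $\theta=\widehat\varphi^{\mr}-\psi^{\mr}$, note $\theta^0=\theta^1=0$, and claim such a morphism of $\Theta$-sequences is null-homotopic, whence $M^{\mr}_{\psi}\cong M^{\mr}_{\widehat\varphi}$. This is not true in general, and the failure is essential. For the mapping cones to be isomorphic via the unitriangular map $\bigl(\begin{smallmatrix}1&0\\ h&1\end{smallmatrix}\bigr)$ you need a \emph{cyclic} homotopy: maps $h^1,\dots,h^{n+2}$ satisfying $\theta^i=d^{i-1}h^i+h^{i+1}d^i$ for $i=1,\dots,n+1$ \emph{and} the wrap-around identity $0=\Sigma\theta^0=d^{n+1}_{Y}h^{n+2}+\Sigma(h^1 d^0_{X})$. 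The inductive argument you invoke (exactness of the representable functors) does produce $h^{n+1},\dots,h^3,h^2$ one at a time, but it cannot in general arrange both boundary conditions simultaneously: starting from the top with $h^{n+2}=0$ one ends needing $h^2d^1_{X}=0$, starting from the bottom with $h^1=0$ one ends needing $d^{n+1}_{Y}h^{n+2}=0$, and in either case the residual term is only known to die after one further composition with a differential, which launches an obstruction chase that does not terminate. That this is not a fixable technicality can be seen already for $n=1$: if every morphism of distinguished triangles vanishing in degrees $0,1$ were null-homotopic in the required sense, then every fill-in of a commutative square agreeing in degrees $0,1$ with a good fill-in would itself have distinguished cone, hence (since by (N4) every square admits a good fill-in) \emph{every} morphism of distinguished triangles would have distinguished mapping cone --- contradicting Neeman's observation that not all morphisms of triangles are ``good''. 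Concretely, the block-diagonal compression $\psi^{\mr}$ of a good morphism $\widehat\varphi^{\mr}$ need not be good, so $M^{\mr}_{\varphi}$ need not be a direct summand of anything you have exhibited in $\Theta$. A correct proof of (N4) for $\widetilde\Theta$ has to proceed differently --- e.g.\ along the lines of Balmer--Schlichting's treatment of the octahedron, exploiting the compatibilities built into the higher octahedral reformulation of (N4) from \cite{BT} to split off the complementary summands of the cone, rather than modifying the fill-in after the fact.
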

\begin{remark}\rm\label{rem}
In Theorem \ref{main2}, $\widetilde{\Sigma}$ is induced by the isomorphism $\Sigma$ of $\C$, more precisely, for any object $\widetilde{C}=(C,e_c)$ in $\widetilde{\C}$, Define $\widetilde{\Sigma}:\widetilde{\C}\rightarrow\widetilde{\C}$ by $\widetilde{\Sigma}(\widetilde{C})=\widetilde{\Sigma}(C,e_c)=(\Sigma C,\Sigma (e_c))$.

\end{remark}

Our main result is the following.
\begin{theorem}\rm\label{main} Let $(\C,\Sigma,\Theta)$ be a Krull-Schmidt $(n+2)$-angulated category and
$\A$ be an $n$-extension closed subcategory of $\C$. Then the idempotent completion $\widetilde\A$ of $\A$ admits an $n$-exangulated structure.
\end{theorem}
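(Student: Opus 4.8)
The plan is to combine the three ingredients the excerpt has set up: Zhou's Theorem~\ref{main1}, which equips $\A$ with an $n$-exangulated structure $(\A,\E_{\A},\s_{\A})$; Lin's Theorem~\ref{main2}, which says the ambient idempotent completion $\widetilde{\C}$ is again $(n+2)$-angulated with $(n+2)$-angles $\widetilde{\Theta}$; and the two preparatory Lemmas~\ref{y0} and~\ref{y1}, which let one strip off trivial direct summands of sequences satisfying (N1)(b), (N2), (N3). The key observation is that $\widetilde{\A}$ is an $n$-extension closed subcategory of $\widetilde{\C}$, so one would like to apply Theorem~\ref{main1} verbatim to the pair $(\widetilde{\C},\widetilde{\A})$. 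The only hypothesis of Theorem~\ref{main1} that needs checking is that $\widetilde{\C}$ is Krull--Schmidt, which is immediate from Remark~\ref{re0}(3).

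First I would define the candidate structure on $\widetilde{\A}$ explicitly: the bifunctor $\E_{\widetilde{\A}} = \widetilde{\C}(-,\widetilde{\Sigma}-)$ restricted to $\widetilde{\A}\op\times\widetilde{\A}$, and the realization $\s_{\widetilde{\A}}$ obtained by restricting $\s_{\widetilde{\Theta}}$, exactly as in the paragraph preceding Theorem~\ref{main1} but now over $\widetilde{\C}$. Next I would verify that $\widetilde{\A}$ is $n$-extension closed in $\widetilde{\C}$: given $\widetilde{A}_0,\widetilde{A}_{n+1}\in\widetilde{\A}$ and $f_{n+1}\colon \widetilde{A}_{n+1}\to\widetilde{\Sigma}\widetilde{A}_0$, one passes to a resolving object of $\C$ (using Remark~\ref{re0}(2): pick $\widetilde{A}_0',\widetilde{A}_{n+1}'$ with $A_0:=\widetilde{A}_0\oplus\widetilde{A}_0'$ and $A_{n+1}:=\widetilde{A}_{n+1}\oplus\widetilde{A}_{n+1}'$ in $\A$), extends $f_{n+1}$ together with zero maps on the complementary summands to a morphism $A_{n+1}\to\Sigma A_0$, applies $n$-extension closedness of $\A$ in $\C$ to get an $(n+2)$-angle in $\Theta$ with middle terms in $\A$, and then takes the direct summand of this $(n+2)$-angle corresponding to the idempotents — this summand lies in $\widetilde{\Theta}$ by definition, has outer terms $\widetilde{A}_0,\widetilde{A}_{n+1}$ and realizes $f_{n+1}$, and its middle terms are direct summands of objects of $\A$, hence in $\widetilde{\A}$. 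Here Lemma~\ref{y1} (or rather the general splitting-off argument behind it) is what guarantees the summand decomposition is compatible with the truncated shape. Then I would invoke Theorem~\ref{main1} with $\C$ replaced by $\widetilde{\C}$ and $\A$ by $\widetilde{\A}$ to conclude $(\widetilde{\A},\E_{\widetilde{\A}},\s_{\widetilde{\A}})$ is $n$-exangulated. Finally I would identify this with the idempotent completion of $(\A,\E_{\A},\s_{\A})$ in the obvious sense, and remark — as the introduction promises — that it is in general neither $n$-exact (all distinguished $n$-exangles would split, impossible by Lemma~\ref{lemma}) nor $(n+2)$-angulated (since $\widetilde{\A}$ need not be closed under $\widetilde{\Sigma}$).

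The main obstacle I anticipate is the second step: checking that $\widetilde{\A}$ is genuinely $n$-extension closed inside $\widetilde{\C}$, and in particular that the summand one extracts from the $\Theta$-sequence really is an $(n+2)$-$\widetilde{\Sigma}$-sequence of the correct truncated form with all intermediate terms landing in $\widetilde{\A}$ rather than merely in $\widetilde{\C}$. Since $\widetilde{\A}$ is closed under direct summands in $\widetilde{\C}$ (by the standing convention at the top of Section~3) this is really a bookkeeping matter, but one must be careful that the idempotent acting on the $\Theta$-sequence is the one built from $e_{a_0}\oplus 0$ and $e_{a_{n+1}}\oplus 0$ together with a compatible choice on the middle terms, which is precisely where Lemma~\ref{y1} does the work of showing such a compatible idempotent exists and produces a sequence of the right shape. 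A secondary point worth spelling out is why the restricted realization $\s_{\widetilde{\Theta}}$ is still exact and why $\s_{\widetilde{\A}}$ realizes $\E_{\widetilde{\A}}$-extensions — but these follow formally once one knows $(\widetilde{\C},\E_{\widetilde{\Sigma}},\s_{\widetilde{\Theta}})$ is $n$-exangulated (Theorem~\ref{main2} plus \cite[Proposition 4.5]{HLN}) together with Theorem~\ref{main1} applied at the level of $\widetilde{\C}$. The payoff of routing everything through Theorem~\ref{main1} rather than verifying the $n$-exangulated axioms directly is exactly the point flagged in the introduction: it sidesteps a direct verification of (EA1) for $\widetilde{\A}$.
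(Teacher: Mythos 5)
Your proposal is correct and follows essentially the same route as the paper: establish that $\widetilde{\C}$ is a Krull--Schmidt $(n+2)$-angulated category via Theorem~\ref{main2} and Remark~\ref{re0}(3), show $\widetilde{\A}$ is $n$-extension closed in $\widetilde{\C}$ by padding with complementary summands, extending by zero, and splitting off the trivial pieces via Lemma~\ref{y1}, and then apply Theorem~\ref{main1} to the pair $(\widetilde{\C},\widetilde{\A})$. The paper's proof is exactly this argument, including the remark that routing through Theorem~\ref{main1} avoids a direct verification of (EA1).
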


\begin{proof} By Theorem \ref{main2}, we know that the idempotent completion $(\widetilde{\C},\widetilde{\Sigma},\widetilde{\Theta})$ of $(\C,\Sigma,\Theta)$ is an  $(n+2)$-angulated category. We claim that $\widetilde\A$ is an $n$-extension closed subcategory of $\widetilde{\C}$. Indeed, for each morphism $f_{n+1}:{A_{n+1}}\rightarrow \widetilde{\Sigma}{A_{0}}$ with ${A_{n+1}},{A_{0}}\in\widetilde{\A}$, we can choose two objects
${A'_{n+1}},{A'_{0}}\in\widetilde{\A}$ such that ${A_{n+1}}\oplus{A'_{n+1}},{A_{0}}\oplus{A'_{0}}\in{\A}$ by Remark \ref{re0}. Note that $\left(
\begin{smallmatrix}                                                                                                                                                    f_{n+1} & 0 \\                                                                                                                                                    0 & 0 \\                                                                                                                                                  \end{smallmatrix}                                                                                                                                                \right):{A_{n+1}}\oplus
{A'_{n+1}}\rightarrow {\Sigma}{A_{0}}\oplus{\Sigma}{A'_{0}}$ is a morphism in $\C$. Since $\A$ is an $n$-extension closed subcategory of $\C$,  there exists an $(n+2)$-angle
$$\widetilde{X}_\bullet=({A_{0}}\oplus {A'_{0}}\xrightarrow{f_{0}
  }{A_{1}}\xrightarrow{f_1} {A_{2}}\xrightarrow{f_3}\cdots\xrightarrow{f_{n-1}}A_{n}\xrightarrow{{f_{n}}}{{A_{n+1}}\oplus
{A'_{n+1}}}\xrightarrow{\left(                                                                                                                                                  \begin{smallmatrix}                                                                                                                                                    f_{n+1} & 0 \\                                                                                                                                                    0 & 0 \\                                                                                                                                                  \end{smallmatrix}                                                                                                                                                \right)} {\Sigma} {A_{0}}\oplus {\Sigma} {A'_{0}})$$
in $\Theta$, where $A_1,A_2,\cdots,A_n\in\A$. Since $\widetilde\C$ is idempotent complete and $\widetilde{X}_\bullet\in\widetilde\Theta$, It follows that $\widetilde{X}_\bullet\cong X_\bullet\oplus X'_\bullet$ by Lemma \ref{y1}, where
 $$X_\bullet=(A_0\xrightarrow{f'_0}A'_1\xrightarrow{f_1'} A_2\xrightarrow{f_3} \cdots\xrightarrow{f_{n-2}} A_{n-1}\xrightarrow{f_{n-1}'} A_n'\xrightarrow{f_n'}A_{n+1}\xrightarrow{f_{n+1}}\widetilde{\Sigma} A_0),$$
 $$X'_\bullet=(A_0'\xrightarrow{\id} A_0'\xrightarrow{0} 0\rightarrow 0\rightarrow\cdots\rightarrow 0\xrightarrow{0} A'_{n+1}\xrightarrow{\id}A'_{n+1}\xrightarrow{0}\widetilde{\Sigma} A_0').$$
So $X_\bullet$ belongs to $\widetilde{\Theta}$. Note that $A'_1,A_2,\cdots,A'_n\in\A$.  By Remark \ref{re0}, we have that $\widetilde\A$ is an $n$-extension closed subcategory of $\widetilde{\C}$. Therefore, $\widetilde\A$ admits an $n$-exangulated structure by Theorem \ref{main1}.
\end{proof}

\begin{remark} The proof of Theorem \ref{main},
we avoid verifying that (EA1) holds, in fact, the verification of this axiom is complicated and difficult.
\end{remark}

\textbf{Jian He}\\
Department of Applied Mathematics, Lanzhou University of Technology, Lanzhou 730050, Gansu, P. R. China\\
E-mail: \textsf{jianhe30@163.com}\\[0.3cm]
\textbf{Jing He}\\
College of Science, Hunan University of Technology and Business, 410205 Changsha, Hunan P. R. China\\
E-mail: \textsf{jinghe1003@163.com}\\[0.3cm]
\textbf{Panyue Zhou}\\
College of Mathematics, Hunan Institute of Science and Technology, Yueyang 414006, Hunan, P. R. China\\
E-mail: \textsf{panyuezhou@163.com}

\end{document}